\documentclass[12pt,oneside,reqno]{amsart}

\usepackage{tikz}
\usetikzlibrary{arrows.meta}
\usetikzlibrary{bending}

\usepackage{url}

 \textwidth 165mm
 \textheight 235mm
\setlength{\topmargin}{-0.5cm}
\setlength{\oddsidemargin}{0cm}
\setlength{\evensidemargin}{0cm}
\hfuzz=3pt
\usepackage{hyperref}
\usepackage{amsxtra}
\usepackage{amsopn}
\usepackage{amsmath,amsthm,amssymb}
\usepackage{color}
\usepackage{amscd}
\usepackage{pifont}
\usepackage{amsfonts}
\usepackage{latexsym}
\usepackage{verbatim}
\usepackage{pb-diagram}
\usepackage{tikz}

\newcommand{\fr}{\mathfrak}

\newcommand{\op}{\operatorname}

 \newtheorem{lemma} {Lemma} [section]
\newtheorem{theorem}[lemma]{Theorem} 
\newtheorem{remark}[lemma] {Remark} 
\newtheorem{prop} [lemma]{Proposition}  
\newtheorem{definition}[lemma] {Definition} 
\newtheorem{corol}[lemma] {Corollary} 
\newtheorem{example}[lemma] {Example}

\begin{document}
\title{Geodesic orbit metrics on the compact Lie group $G_2$} 

\author{Nikolaos Panagiotis Souris}
\address{University of Patras, Department of Mathematics, University Campus, 26504, Rio Patras, Greece}
\email{nsouris@upatras.gr}

\begin{abstract} Geodesics on Riemannian manifolds are precisely the locally length-minimizing curves, but their explicit description via simple functions is rarely possible. Geodesics of the simplest form, such as lines on Euclidean space and great circles on the round sphere, usually arise as orbits of one-parameter groups of isometries via Lie group actions.  Manifolds where all geodesics are such orbits are called geodesic orbit manifolds (or g.o. manifolds), and their understanding and classification spans a quite long and continuous history in Riemannian geometry. In this paper, we classify the left-invariant g.o. metrics on the compact Lie group $G_2$, using the nice representation theoretic behaviour of a class of Lie subgroups called (weakly) regular. We expect that the main tools and insights discussed here will facilitate further classifications of g.o. Lie groups, particularly of lower ranks.

\medskip
\noindent  {\it Mathematics Subject Classification 2020.} Primary 53C25; Secondary 53C30. 

\medskip
\noindent {\it Keywords}:  geodesic orbit Lie group; geodesic orbit metric.

\end{abstract}
\maketitle

\section{Introduction}

A geodesic $\gamma$ on a Riemannian manifold $(M,g)$ is called \emph{homogeneous} if it is generated by isometries, in the sense that there exists a Lie group $G$ of isometries of $(M,g)$ such that 

\begin{equation}\label{homgeod}\gamma(t)=\exp(tX)\cdot p, \ \ t\in \mathbb R,\end{equation}

\noindent where $\exp$ denotes the exponential map in $G$, $p\in M$, and $\cdot$ denotes the (isometric) action of $G$ on $M$. A Riemannian manifold $(M,g)$ is called a \emph{geodesic orbit manifold} (or g.o. manifold) if all geodesics on $M$ are homogeneous; The metric $g$ is called a \emph{geodesic orbit metric}.  The study and classification of g.o. manifolds is an intensively studied problem in Riemannian geometry for the last 35 years (see the recent monograph \cite{BerNik20} on the subject).  It also has extensions to diverse geometric contexts, such as in sub-Riemannian (\cite{Po22}) and Finsler geometry (\cite{YanDe14}). Generalizations of homogeneous geodesics, written as products of multiple exponential factors, were studied in \cite{So23}.

Apart from their geometric simplicity, homogeneous geodesics are physically interpreted as \emph{relative equilibria} of fluids, or \emph{stationary rotations} of rigid bodies (\cite{Arn89}), that is free motions where the particles of the body are relatively motionless. Geodesic orbit metrics are also applied in fields such as \emph{computational anatomy} (\cite{LoPe13}) and the \emph{learning of neural networks} (\cite{NisAk05}), due to the convenient computational implementation of the geodesics \eqref{homgeod} as matrix exponentials.   

Despite the several known classes of g.o. manifolds, such as \emph{symmetric} (\cite{KoNo69}), \emph{weakly symmetric} (\cite{BeKoVa97}) and \emph{Clifford - Wolf} (\cite{BerNik09}) spaces, the classification of g.o. manifolds remains an open problem, even within the class of compact simple Lie groups, where only partial classifications have been produced up to this day (e.g. \cite{CheCheWo18}).  One may attribute the difficulty of the classification problem on Lie groups to the intricacy of the moduli space of left-invariant metrics: essentially any inner product on the corresponding Lie algebra generates such a metric and vice versa.

  Motivated by this problem, we discuss some basic tools and insights (Section \ref{Sec2}), expecting to simplify and initiate a more systematic study of the classification problem of g.o. metrics on compact simple Lie groups, at least of low ranks. At the same time, the main result of this paper is the complete classification of the left-invariant g.o. metrics on the compact simple Lie group $G_2$ (Theorem \ref{main}).

Before we state our main result, let $G$ be a Lie group with Lie algebra $\fr{g}$. Then the left-invariant Riemannian metrics $g$ are in bijection with inner products $\langle \ ,\ \rangle$ on $\fr{g}$. Henceforth assume that $G$ is compact and simple, and let $B$ denote the Killing form of $\fr{g}$ which is negative-definite. Any Lie subalgebra $\fr{k}$ of $\fr{g}$ induces the $B$-orthogonal decomposition 

\begin{equation}\label{dec}\fr{g}=\fr{k}\oplus \fr{m}=\underbrace{\fr{z}(\fr{k})\oplus \fr{k}_1\oplus \cdots \oplus \fr{k}_s}_{\fr{k}}\oplus\fr{m}, \end{equation}

\noindent where $\fr{z}(\fr{k})$ is the center of $\fr{k}$, the subalgebras $\fr{k}_1,\dots,\fr{k}_s$ are the simple ideals of $\fr{k}$, and $\fr{m}$ is the $B$-orthogonal complement of $\fr{k}$ in $\fr{g}$. We consider the class of left-invariant metrics generated by inner products of the form 

\begin{equation}\label{metricform} \langle \ ,\ \rangle=\left.( \ , \ )\right|_{\fr{z}(\fr{k})\times \fr{z}(\fr{k})}+ \lambda_1\left.(-B)\right|_{\fr{k}_1\times \fr{k}_1}+\cdots + \lambda_s\left.(-B)\right|_{\fr{k}_s\times \fr{k}_s}+\mu\left.(-B)\right|_{\fr{m}\times \fr{m}},\end{equation}

\noindent where $\left.( \ , \ )\right|_{\fr{z}(\fr{k})\times \fr{z}(\fr{k})}$ denotes any inner product on $\fr{z}(\fr{k})$. The above metrics are precisely the \emph{naturally reductive} metrics (c.f. \cite{DaZi79} for the terminology) of the compact simple Lie groups.  Our main result shows that the above metrics exhaust all g.o. metrics on $G_2$.

\begin{theorem}\label{main}Let $g$ be a left-invariant Riemannian metric on the compact Lie group $G_2$. Then $g$ is a geodesic orbit metric if and only if there exists a Lie subalgebra $\fr{k}$ of $\fr{g}=\fr{g}_2$ such that the corresponding inner product $\langle \ , \  \rangle$ has the form  \eqref{metricform}. In particular, $g$ is a geodesic orbit metric if and only if it is naturally reductive. \end{theorem}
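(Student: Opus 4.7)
The plan is to establish both directions of the equivalence. The easy direction — that metrics of the form \eqref{metricform} are geodesic orbit — is classical: such metrics are naturally reductive with respect to the decomposition $\fr{g}_2=\fr{k}\oplus\fr{m}$, since each summand of $\langle\ ,\ \rangle$ is $\Ad(K)$-invariant (the restrictions of $-B$ tautologically so, and any inner product on $\fr{z}(\fr{k})$ because $K$ acts trivially on its center), and naturally reductive metrics are known to be g.o.

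For the hard direction, I would encode a left-invariant metric $g$ via the $(-B)$-symmetric positive-definite operator $A\colon\fr{g}_2\to\fr{g}_2$ determined by $\langle X,Y\rangle=-B(AX,Y)$, and translate the g.o. property into the algebraic condition that for every $X\in\fr{g}_2$ there exists $Z$ in the isotropy algebra of the full isometry group with $[AX+Z,X]=0$. Then, using the (weakly) regular subgroup framework from Section \ref{Sec2}, I would argue that the g.o. condition forces the eigenspaces of $A$ to organize themselves into a distinguished ``bulk'' eigenspace $\fr{m}$ whose $(-B)$-orthogonal complement $\fr{k}$ is a Lie subalgebra, with the remaining eigenspaces inside $\fr{k}$ matching the decomposition $\fr{z}(\fr{k})\oplus\fr{k}_1\oplus\cdots\oplus\fr{k}_s$ into center plus simple ideals. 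Outside the center, $A$ must act as a scalar $\lambda_i$ on each $\fr{k}_i$ and as a scalar $\mu$ on $\fr{m}$, while on $\fr{z}(\fr{k})$ it can be an arbitrary positive-definite symmetric operator — yielding precisely \eqref{metricform}.

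The last and most technical step is a case analysis over the Lie subalgebras of $\fr{g}_2$ up to conjugacy — namely the trivial subalgebra, the Cartan subalgebra $\fr{t}$, the short-root and long-root copies of $\fr{su}(2)$, $\fr{u}(2)$, $\fr{so}(4)\cong\fr{su}(2)\oplus\fr{su}(2)$, $\fr{su}(3)$, and $\fr{g}_2$ itself — showing in each case that any g.o. metric whose eigenspace structure refines \eqref{dec} must in fact already have the form \eqref{metricform}. This is where I expect the main obstacle to lie: for the borderline candidates (especially those involving the two non-conjugate $\fr{su}(2)$ subalgebras and the more refined eigenspace patterns inside $\fr{m}$), one must exhibit, using the explicit $G_2$ root system and its characteristic ratio of long-to-short root lengths, a generic $X\in\fr{g}_2$ whose geodesic condition $[AX+Z,X]=0$ has no solution $Z$ unless additional eigenvalue coincidences collapse $A$ down to the prescribed form.
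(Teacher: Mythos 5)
Your skeleton---naturally reductive $\Rightarrow$ g.o.\ for the easy direction, and for the converse a reduction to a case analysis over the possible isotropy subalgebras $\fr{h}\subseteq\fr{g}_2$ of the isometry group (via Ochiai--Takahashi)---matches the paper's, but the two steps you treat as routine are exactly where the content lies, and as written they are gaps. First, your enumeration of subalgebras is incomplete at the critical cases: besides the regular subalgebras you list, $\fr{g}_2$ contains exactly two further conjugacy classes of subalgebras isomorphic to $\fr{su}(2)$ that are \emph{not} regular (the compact forms of the two non-regular semisimple subalgebras of $\fr{g}_2^{\C}$). Establishing that these two, together with the regular ones, exhaust all subalgebras of $\fr{g}_2$ is itself nontrivial and rests on Mayanskiy's classification \cite{Ma16}, which the paper invokes explicitly. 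These two subalgebras are the genuinely delicate cases, and they do not appear in your enumerated list (your ``short-root and long-root copies of $\fr{su}(2)$'' are regular and comparatively easy); a case analysis that omits them, or that cannot certify its own completeness, does not close the argument.

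Second, your assertion that the g.o.\ condition ``forces the eigenspaces of $A$ to organize themselves'' into a block form adapted to $\fr{g}_2=\fr{k}\oplus\fr{m}$ is not automatic: if $\fr{k}$ and $\fr{m}$ contained equivalent $\op{ad}_{\fr{k}}$-submodules, an invariant metric endomorphism could carry nonzero off-diagonal blocks, and no amount of eigenvalue bookkeeping rules this out a priori. The paper supplies the missing mechanism by proving that \emph{every} subalgebra of $\fr{g}_2$ is weakly regular (Definition \ref{weakly})---automatic for regular subalgebras by \cite{So25}, and verified for the two non-regular $\fr{su}(2)$'s by comparing the dimensions of the $\op{ad}_{\fr{h}_i}$-irreducible summands of $\fr{m}$ with $\dim\fr{h}_i=3$---and then applying the normalizer lemma and the splitting theorem (Lemma \ref{normalizerlemma}, Theorem \ref{splittingtheorem}). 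Two smaller points: the g.o.\ criterion of Proposition \ref{gocon} is $[W+X,\Lambda X]=0$, not $[AX+Z,X]=0$ (these are not equivalent conditions); and the paper avoids your proposed root-system computation for $\left.\Lambda\right|_{\fr{m}}$ altogether by quoting the prior classification of g.o.\ metrics on the homogeneous spaces $G_2/H$ (Corollary 1.2 of \cite{So22}), which forces $\left.\Lambda\right|_{\fr{m}}$ to be a multiple of the identity. Your plan is workable in outline, but without the completeness of the subalgebra list and the weak regularity (or some substitute) it does not yet constitute a proof.
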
 

Theorem \ref{main} is proved in Section \ref{Sec3}.

\section{Aspects of the study of g.o. metrics on compact simple Lie groups}\label{Sec2}

For the rest of this section, let $G$ be a compact, connected and simple Lie group with Lie algebra $\fr{g}$.  Denote by $B$ the Killing form of $\fr{g}$. 

\subsection{Left-invariant metrics and metric endomorphisms} A Riemannian metric $g$ on $G$ is called left-invariant if all left translations $L_x$, $x\in G$, are isometries of $(G,g)$. The left-invariant metrics $g$ are in bijection with inner products $\langle \ ,\ \rangle$ on $\fr{g}$.  In turn, they are in bijection with \emph{metric endomorphisms} $\Lambda:\fr{g}\rightarrow \fr{g}$, i.e. endomorphisms that are symmetric with respect to $B$, such that

 \begin{equation*}\label{metend}\langle X,Y\rangle = (-B)(\Lambda X, Y), \ \ X,Y\in \fr{g}.\end{equation*}

\noindent Henceforth, we will make no distinction between a left-invariant metric $g$ and its corresponding metric endomorphism $\Lambda$.

\subsection{$\op{Ad}_K$-invariant metrics} Denote by $\op{Ad}:G\rightarrow \op{Gl}(\fr{g})$, $x\mapsto \op{Ad}_x$, the adjoint representation of $G$.  Denote also by $\op{ad}:\fr{g}\rightarrow \fr{gl}(\fr{g})$, $X\mapsto \op{ad}_X$, the adjoint representation of $\fr{g}$.  Let $K$ be a Lie subgroup of $G$ with Lie algebra $\fr{k}$.  A left-invariant metric $g$ is called \emph{$\op{Ad}_K$-invariant} (or right-invariant by $K$) if all right translations by elements of $K$ are also isometries of $(G,g)$. Equivalently, the corresponding metric endomorphism $\Lambda$ is $\op{Ad}_K$-equivariant, that is $\Lambda\circ \op{Ad}_k=\op{Ad}_k\circ \Lambda$ for all $k\in K$.  Consequently, the endomorphism $\Lambda$ is $\op{ad}_{\fr{k}}$-equivariant, while the last condition is equivalent to the $\op{Ad}_K$-invariance of $g$ if $K$ is connected.  Therefore, the form of any $\op{Ad}_K$-invariant metric depends on the structure of the corresponding representation $\left.\op{Ad}\right|_K:K\rightarrow \op{Gl}(\fr{g})$. We will not delve into further details for the explicit form of the metric endomorphism $\Lambda$, but it can easily be deduced using basic representation theory. Generally, the form of $\Lambda$ becomes more complicated if there exist equivalent $\op{Ad}_K$-submodules in $\fr{g}$. 

Assuming that $K$ is connected, consider the decomposition \eqref{dec}.  Suppose that any non-trivial $\op{Ad}_K$-submodule in $\fr{k}$ is inequivalent to any $\op{Ad}_K$-submodule in $\fr{m}$ (or equivalently, any $\op{Ad}_K$-equivariant map between $\fr{k}$ and $\fr{m}$ is trivial).  Then Schur's lemma implies that both spaces $\fr{k}$ and $\fr{m}$ are invariant by $\Lambda$. More specifically, since the simple ideals $\fr{k}_1,\dots,\fr{k}_s$ are $\op{Ad}_K$-irreducible, pairwise inequivalent and inequivalent with the center $\fr{z}(\fr{k})$, the endomorphism $\Lambda$ can be represented by the block-diagonal form

\begin{equation}\label{Schurform}\Lambda=\begin{pmatrix} 
 \left.\Lambda\right|_{\fr{z}(\fr{k})} & 0 & \cdots &0 &0\\
 0& \left.\lambda_1\op{Id}\right|_{\fr{k}_1} &\cdots &0 &0\\
  \vdots & \cdots & \ddots &\vdots &\vdots\\
  0&\cdots &\cdots &\left.\lambda_s\op{Id}\right|_{\fr{k}_s} &0\\
  0&\cdots &\cdots &0 &\left.\Lambda\right|_{\fr{m}}
  \end{pmatrix},
\end{equation}
 
\noindent with respect to the decomposition \eqref{dec}.  Unfortunately, the above diagonal form parametrizes all $\op{Ad}_K$-invariant metrics only in the case where there exist no non-trivial $\op{Ad}_K$-equivariant maps between $\fr{k}$ and its $B$-orthogonal complement $\fr{m}$ (e.g. if $K$ has maximal rank).  However, there exists a large class of algebras $\fr{k}$ for which this diagonalization is possible: These are the normalizers of the weakly regular algebras that we will see in the sequel.

\subsection{Regular and weakly regular subgroups and subalgebras} For a Lie subalgebra $\fr{h}$ of $\fr{g}$, let 

\[ \fr{n}_{\fr{g}}(\fr{h})=\{X\in \fr{g}: [X,Y]\in \fr{h} \ \ \makebox{for all} \ \ Y\in \fr{h}\}\] 

\noindent be the normalizer of $\fr{h}$ in $\fr{g}$, i.e. the largest subalgebra of $\fr{g}$ containing $\fr{h}$ as an ideal.  Following Dynkin (\cite{Dyn52}), a Lie subalgebra $\fr{h}$ of $\fr{g}$ is called \emph{regular} if its normalizer contains a Cartan subalgebra of $\fr{g}$. Any abelian subagebra and any subalgebra of $\fr{g}$ having maximal rank is regular. We note that the regular subalgebras are precisely the ideals of the subalgebras of $\fr{g}$ that have maximal rank. Accordingly, a Lie subgroup $H$ of $G$ is called regular if its Lie algebra $\fr{h}$ is a regular subalgebra of $\fr{g}$. Set $\fr{k}:=\fr{n}_{\fr{g}}(\fr{h})$ and consider the $B$-orthogonal decomposition $\fr{g}=\fr{k}\oplus \fr{m}$.  In \cite{So25}, Lemma 4.1, it is shown that  if $H$ is a regular subgroup of $G$ with Lie algebra $\fr{h}$ then any non-trivial $\op{ad}_{\fr{k}}$-submodule in $\fr{k}$ is inequivalent to any $\op{ad}_{\fr{k}}$-submodule in $\fr{m}$. This property has led to the following definition in \cite{So25}.

\begin{definition}\label{weakly} Let $H$ be a Lie subgroup of $G$ with Lie algebra $\fr{h}$.  Set $\fr{k}:=\fr{n}_{\fr{g}}(\fr{h})$ and consider the $B$-orthogonal decomposition $\fr{g}=\fr{k}\oplus \fr{m}$.  The subgroup $H$ is called weakly regular if any non-trivial $\op{ad}_{\fr{k}}$-submodule in $\fr{k}$ is inequivalent to any $\op{ad}_{\fr{k}}$-submodule in $\fr{m}$. The algebra $\fr{h}$ is also called weakly regular.\end{definition}

\noindent Consequently, any regular subgroup of $G$ is weakly regular, but there exist weakly regular subgroups that are not regular (\cite{So25}). More importantly, if $H$ is a weakly regular subgroup of $G$ then any $\op{ad}_{\fr{k}}$-equivariant endomorphism $\Lambda:\fr{g}\rightarrow \fr{g}$ has the block diagonal form \eqref{Schurform}.

\begin{remark} In view of the above discussion, it would be useful to determine the weakly regular subalgebras of compact simple Lie groups. For example, in the proof of Theorem \ref{main} in the final section, it is shown that any subalgebra of $\fr{g}_2$ is weakly regular. This fact greatly simplifies the study of g.o. metrics on $G_2$.\end{remark}

\subsection{Isometry groups of left-invariant metrics: The Ochiai-Takahashi theorem}  The following useful theorem of Ochiai and Takahashi yields the form of the isometry groups of compact simple Lie groups endowed with left-invariant metrics.

\begin{theorem}\emph{(\cite{OT76})} Let $G$ be a compact, connected, simple Lie group and $g$ be a left-invariant Riemannian metric on $G$. Then the isometry group of $(G,g)$ is contained in $L(G)R(G)$, i.e. the product of the groups of left and right translations in $G$.\end{theorem}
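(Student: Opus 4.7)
The plan is to reduce the statement to the analysis of the isotropy subgroup of $I := \op{Iso}(G,g)$ at the identity $e \in G$, and then to exploit the compact semisimple structure of $\fr{g}$. Since $g$ is left-invariant, the group $L(G) \cong G$ is contained in $I$ and acts transitively, so $I = L(G) \cdot K$, where $K := I_e$ is the (compact) isotropy subgroup at $e$. If $\phi \in K$ has the form $\phi = L_a R_b$, then $\phi(e) = ab = e$ forces $b = a^{-1}$, whence $\phi = L_a \circ R_{a^{-1}} = \op{Inn}_a$ is an inner automorphism of $G$. Thus it suffices to prove that every isometry of $(G,g)$ fixing $e$ is an inner automorphism. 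Since an isometry of a connected Riemannian manifold is uniquely determined by its first-order jet at a point, this reduces further to showing that the differential $A := d\phi_e$ of any $\phi \in K$ lies in $\op{Ad}(G) \subset O(\fr{g}, \langle \ , \ \rangle)$.

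The natural route is through the Lie algebra $\fr{i}$ of Killing vector fields on $(G,g)$, which contains the subalgebra $\fr{g}^R \cong \fr{g}$ of right-invariant vector fields (generating $L(G)$) and admits a vector-space decomposition $\fr{i} = \fr{g}^R \oplus \fr{k}$, where $\fr{k} := \op{Lie}(K)$. The crucial step is to show that $\fr{g}^R$ is $\op{Ad}_K$-invariant in $\fr{i}$, or equivalently that $L(G)$ is normalized by $K$ in $I$. Once this is established, the induced action $K \to \op{Aut}(\fr{g})$ lands inside $\op{Ad}(G)$: for compact simple $\fr{g}$ one has $\op{Aut}(\fr{g})^0 = \op{Inn}(\fr{g})$ by Weyl's theorem $\op{Der}(\fr{g}) = \op{ad}(\fr{g})$, while the finite outer component is excluded by the requirement of preserving $\langle \ , \ \rangle$. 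Hence $A = \op{Ad}_a$ for some $a \in G$, and the 1-jet uniqueness of isometries then gives $\phi = \op{Inn}_a$, completing the reduction.

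The hard part is establishing the $\op{Ad}_K$-invariance of $\fr{g}^R$ in $\fr{i}$. A natural approach is to exploit the rigidity supplied by the compact semisimple structure: the Levi--Civita connection of a left-invariant metric is fully encoded in $\langle \ ,\ \rangle$ and the bracket of $\fr{g}$ via Nomizu's formula, so any isometry must intertwine the bracket up to an inner derivation. Averaging over the compact group $K$ provides an $\op{Ad}_K$-invariant complement of $\fr{k}$ in $\fr{i}$, and showing that this complement may be taken to be $\fr{g}^R$ itself is precisely where the semisimplicity of $\fr{g}$ enters decisively. This structural rigidity is the technical core of the Ochiai--Takahashi argument, and I expect it to be the main obstacle in any attempt at an independent proof.
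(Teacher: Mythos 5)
The paper offers no proof of this statement --- it is quoted directly from \cite{OT76} --- so your proposal can only be judged on its own terms, and on those terms it is not a proof. You correctly reduce the theorem to showing that the isotropy group $K$ at $e$ normalizes $L(G)$, equivalently that $\fr{g}^R$ is $\op{Ad}_K$-invariant inside the Killing algebra $\fr{i}$, but you then state explicitly that this step is ``the technical core of the Ochiai--Takahashi argument'' and ``the main obstacle,'' and you do not carry it out. The appeal to Nomizu's formula and to averaging over $K$ produces \emph{some} $\op{Ad}_K$-invariant complement of $\fr{k}$ in $\fr{i}$, but gives no reason why that complement should be $\fr{g}^R$, and no argument is supplied for how semisimplicity forces this. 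Since essentially the entire content of the theorem is concentrated in exactly this step, what you have is an (accurate) outline of what must be proved rather than a proof.

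Two further points in the reduction are wrong as stated. First, the claim that the outer component of $\op{Aut}(\fr{g})$ ``is excluded by the requirement of preserving $\langle\ ,\ \rangle$'' is false: for the bi-invariant metric on $\SU(n)$, $n\geq 3$, complex conjugation is an outer automorphism of the group that is an isometry fixing $e$. Second, for a bi-invariant metric the inversion $x\mapsto x^{-1}$ is an isometry fixing $e$ whose differential $-\op{Id}$ is not a Lie algebra automorphism at all, so the full isotropy group need not consist of automorphisms and the containment of the \emph{full} isometry group in $L(G)R(G)$ actually fails in that case. The theorem of \cite{OT76} is really a statement about the identity component $\op{Iso}^0(G,g)$ (which is all the present paper needs, since only one-parameter subgroups of isometries are used); your argument should therefore be run for $K^0$, or at the level of the Lie algebra of Killing fields, where the exclusion of outer automorphisms follows from connectedness rather than from metric preservation.
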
  

The above theorem implies that the isometry group of $(G,g)$ is locally isomorphic to $G\times H$, where $H$ is a closed subgroup of $G$.  The group $G\times H$ acts on $G$ by $(x,y)\cdot z:=xzy^{-1}$, $(x,y)\in G\times H$, $z\in G$. As a result, for any left-invariant metric on $G$ there exists a Lie subalgebra $\fr{h}$ of $\fr{g}$ such that the corresponding metric endomorphism $\Lambda$ is $\op{ad}_{\fr{h}}$-equivariant.   If $g$ is a g.o. metric, this means that there exists a Lie subgroup $H$ of $G$ such that any geodesic of $(G,g)$ is the orbit of an one-parameter subgroup of $G\times H$ under the aforementioned action. To emphasize the subgroup $H$, we call $g$ a \emph{$G\times H$-geodesic orbit metric}. It is trivial but useful to note that any $G\times H$-g.o. metric is $\op{Ad}_H$-invariant. In the sequel, we present a condition for $G\times H$-g.o. metrics.

\subsection{The geodesic orbit condition for compact simple Lie groups}  The following is a necessary and sufficient condition for $G\times H$-g.o. metrics on compact Lie groups. It was initially proven for compact simple Lie groups, and under an additional condition in \cite{Nik19}, and then generalized for compact Lie groups in \cite{So25}. We state it for simple Lie groups.  

\begin{prop}\label{gocon}Let $G$ be a compact, connected, simple Lie group endowed with a left-invariant metric $g$ and let $\Lambda:\fr{g} \rightarrow \fr{g}$ be the corresponding metric endomorphism. Then $g$ is a $G\times H$-g.o. metric if and only if for all $X\in \fr{g}$ there exists a vector $W=W(X)$ in the Lie algebra $\fr{h}$ of $H$ such that

\begin{equation*}\label{GOCond}[W+X,\Lambda X]=0.\end{equation*}   \end{prop}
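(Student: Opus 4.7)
The plan is to reduce to the classical Kowalski--Vanhecke geodesic lemma applied to the homogeneous-space presentation $G\cong(G\times H)/\Delta H$ induced by the Ochiai--Takahashi theorem.

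First, I would set up the reductive structure. Under the action $(x,y)\cdot z=xzy^{-1}$, the isotropy at $e\in G$ is the diagonal $\Delta H=\{(h,h):h\in H\}$, with Lie algebra $\Delta\fr{h}=\{(Y,Y):Y\in\fr{h}\}\subset\fr{g}\oplus\fr{h}$. I would take $\fr{m}:=\fr{g}\times\{0\}$ as the reductive complement, which is evidently $\Ad(\Delta H)$-invariant since $\Ad_{(h,h)}(X,0)=(\Ad_hX,0)$, and identify $\fr{m}\cong T_eG\cong\fr{g}$ via $(X,0)\mapsto X$ (the differential of the action sends $(Z,W)$ to $Z-W$). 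Because $g$ is $\Ad_H$-invariant, the inner product $\langle\cdot,\cdot\rangle$ on $\fr{g}$ transports to an $\Ad(\Delta H)$-invariant inner product on $\fr{m}$, and the resulting $(G\times H)$-invariant metric on $(G\times H)/\Delta H$ pulls back to $g$.

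Next, I would apply the geodesic lemma: for a reductive homogeneous space, $\gamma(t)=\exp(tV)\cdot e$ with $V\in\fr{g}\oplus\fr{h}$ is a geodesic if and only if $\langle[V,U]_\fr{m},V_\fr{m}\rangle=0$ for every $U\in\fr{m}$. Writing $V=(Z,W)$ and prescribing the initial velocity $Z-W$ to equal a given $X\in\fr{g}$ forces $Z=W+X$. Since $\fr{m}$ contains the entire first factor, $V_\fr{m}=(X,0)$ and $[V,(Y,0)]=([W+X,Y],0)\in\fr{m}$ directly, so the geodesic condition becomes
\[\langle[W+X,Y],X\rangle=0\quad\text{for all }Y\in\fr{g}.\]

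Finally, I would convert this to a bracket identity using the bi-invariance of $B$:
\[\langle[W+X,Y],X\rangle=-B([W+X,Y],\Lambda X)=B(Y,[W+X,\Lambda X]),\]
so non-degeneracy of $B$ yields the equivalent form $[W+X,\Lambda X]=0$. Ranging over all $X\in\fr{g}$, and observing that by left-invariance of $g$ every homogeneous geodesic can be translated to start at $e$, gives the proposition. The main obstacle I anticipate is not conceptual but organisational: verifying carefully that $\fr{m}=\fr{g}\times\{0\}$ really produces an $\Ad(\Delta H)$-invariant reductive complement whose induced $(G\times H)$-invariant metric on $(G\times H)/\Delta H$ agrees with the original left-invariant $g$, so that the geodesic lemma applies verbatim; once this is in place, the rest reduces to two lines of Killing-form manipulation.
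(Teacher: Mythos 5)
The paper does not actually prove Proposition \ref{gocon}: it is quoted as a known result, with the proof deferred to \cite{Nik19} (for compact simple groups) and \cite{So25} (for the general compact case). Your derivation --- presenting $G$ as $(G\times H)/\Delta H$ with reductive complement $\fr{m}=\fr{g}\times\{0\}$, applying the Kowalski--Vanhecke geodesic lemma, and converting $\langle[W+X,Y],X\rangle=0$ for all $Y$ into $[W+X,\Lambda X]=0$ via the $B$-symmetry of $\Lambda$ and the invariance and non-degeneracy of $B$ --- is correct and is essentially the argument used in those references, so you have reconstructed the intended proof rather than found a new one. One small point deserves explicit mention: the step where the inner product on $\fr{m}$ descends to a $(G\times H)$-invariant metric on $(G\times H)/\Delta H$ genuinely requires the $\op{Ad}_H$-invariance of $g$ (equivalently, the $\op{ad}_{\fr{h}}$-equivariance of $\Lambda$). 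In the ``only if'' direction this is automatic, since a $G\times H$-g.o.\ metric is $\op{Ad}_H$-invariant (as the paper notes, $H$ arises inside the isometry group via Ochiai--Takahashi); in the ``if'' direction it should be stated as part of the hypotheses on the pair $(g,H)$ rather than silently assumed, since the algebraic condition $[W+X,\Lambda X]=0$ alone does not obviously imply it. With that caveat made explicit, the proof is complete.
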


\noindent Working the above condition, one can derive various simplification results for g.o. metrics.  For example, the following simple Lie-algebraic condition allows the reduction of the possible eigenvalues of the metric endomorphism corresponding to a g.o. metric.

\begin{lemma}\label{eigenv} Let $G$ be a compact, connected, simple Lie group endowed with a left-invariant g.o. metric $g$.  Let $\fr{g}_1,\fr{g}_2\subseteq \fr{g}$ be eigenspaces of the associated metric endomorphism $\Lambda$, corresponding to eigenvalues $\lambda_1, \lambda_2$ respectively. If the space $[\fr{g}_1,\fr{g}_2]$ has non-zero $B$-orthogonal projection to the $B$-orthogonal complement of $\fr{g}_1\oplus \fr{g}_2$ then $\lambda_1=\lambda_2$.\end{lemma}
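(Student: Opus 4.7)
The strategy is to apply Proposition \ref{gocon} to a general vector $X=X_1+X_2$ with $X_i\in\fr{g}_i$, and then project the resulting bracket identity $B$-orthogonally onto the complement $\fr{p}$ of $\fr{g}_1\oplus\fr{g}_2$ in order to isolate the term $(\lambda_2-\lambda_1)[X_1,X_2]$. I would assume for contradiction that $\lambda_1\neq\lambda_2$, so that $\fr{g}_1$ and $\fr{g}_2$ are $B$-orthogonal as eigenspaces of the $B$-symmetric endomorphism $\Lambda$ with distinct eigenvalues, and the decomposition $\fr{g}=\fr{g}_1\oplus\fr{g}_2\oplus\fr{p}$ makes sense.

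First I would record the following preliminary observation. Since $g$ is a $G\times H$-g.o. metric it is $\Ad_H$-invariant, hence $\Lambda$ is $\ad_{\fr{h}}$-equivariant. This forces each eigenspace of $\Lambda$ to be $\ad_{\fr{h}}$-invariant: if $\Lambda Y=\lambda Y$ and $W\in\fr{h}$, then $\Lambda[W,Y]=[W,\Lambda Y]=\lambda[W,Y]$. In particular, for every $W\in\fr{h}$ the vectors $[W,X_1]$ and $[W,X_2]$ remain in $\fr{g}_1$ and $\fr{g}_2$ respectively.

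Next I would apply Proposition \ref{gocon} to $X=X_1+X_2$ and obtain some $W=W(X)\in\fr{h}$ with $[W+X,\Lambda X]=0$. Using $\Lambda X=\lambda_1 X_1+\lambda_2 X_2$ together with $[X_i,X_i]=0$, this identity expands to
\begin{equation*}
\lambda_1[W,X_1]+\lambda_2[W,X_2]+(\lambda_2-\lambda_1)[X_1,X_2]=0.
\end{equation*}
By the preliminary observation, the first two summands lie in $\fr{g}_1\oplus\fr{g}_2$. Consequently, $B$-orthogonal projection onto $\fr{p}$ annihilates them, leaving $(\lambda_2-\lambda_1)\,\pi_{\fr{p}}[X_1,X_2]=0$ for every choice of $X_1\in\fr{g}_1$ and $X_2\in\fr{g}_2$. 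The hypothesis, however, provides such vectors with $\pi_{\fr{p}}[X_1,X_2]\neq 0$, contradicting $\lambda_1\neq\lambda_2$; hence $\lambda_1=\lambda_2$.

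The only conceptual step is the preliminary observation that $\Ad_H$-invariance of any $G\times H$-g.o. metric traps the ``mixed'' commutators $[W,X_i]$ inside the corresponding eigenspaces $\fr{g}_i$; without this, the terms involving $W$ would survive the projection and block the argument. Once that observation is in place, the rest is a routine computation, and I do not anticipate any genuine obstacle.
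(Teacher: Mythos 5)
Your proposal is correct and follows essentially the same route as the paper: apply Proposition \ref{gocon} to $X=X_1+X_2$, expand $[W+X,\Lambda X]=0$ to isolate $(\lambda_2-\lambda_1)[X_1,X_2]$, and use the $\op{ad}_{\fr{h}}$-invariance of the eigenspaces (coming from $\op{Ad}_H$-invariance of the metric) to conclude that this term lies in $\fr{g}_1\oplus\fr{g}_2$, contradicting the hypothesis unless $\lambda_1=\lambda_2$. The only cosmetic differences are that you frame it as a contradiction and spell out the preliminary observation that the paper states without proof.
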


\begin{proof}  There exists a subgroup $H$ of $G$ such that $g$ is a $G\times H$-g.o. metric. Let $X_i\in \fr{g}_i$, $i=1,2$, and set $X:=X_1+X_2$.  By Proposition \ref{gocon}, there exists a vector $W\in \fr{h}$ such that $0=[W+X,\Lambda X]=\lambda_1[W,X_1]+\lambda_2[W,X_2]-(\lambda_1-\lambda_2)[X_1,X_2]$, i.e.

\[  (\lambda_1-\lambda_2)[X_1,X_2]=\lambda_1[W,X_1]+\lambda_2[W,X_2].\]

\noindent Since the metric $g$ is $\op{Ad}_H$-invariant, each eigenspace $\fr{g}_i$ of $\Lambda$ is $\op{ad}_{\fr{h}}$-invariant.  Therefore, the above equation yields $(\lambda_1-\lambda_2)[X_1,X_2]\in \fr{g}_1\oplus \fr{g}_2$ for all $X_1\in \fr{g}_1$, $X_2\in \fr{g}_2$, implying that $(\lambda_1-\lambda_2)[\fr{g}_1,\fr{g}_2]\subseteq \fr{g}_1\oplus \fr{g}_2$.  Then the assumption of the lemma yields $\lambda_1=\lambda_2$.\end{proof}

\begin{example}Let $G=Sp(n)$ and $K=Sp(n_1)\times Sp(n_2)\times Sp(n_3)\subseteq G$, where $n_1+n_2+n_3=n$.  We will classify the $G\times K$-g.o. metrics on $G$.  We consider the Lie algebra $\fr{k}=\fr{sp}(n_1)\oplus \fr{sp}(n_2)\oplus \fr{sp}(n_3)$ of $K$.  Then we have the $B$-orthogonal decomposition

\begin{equation}\label{far}\fr{g}=\fr{k}\oplus \fr{m}=\fr{sp}(n_1)\oplus \fr{sp}(n_2)\oplus \fr{sp}(n_3)\oplus \fr{m}_1\oplus \fr{m}_2\oplus \fr{m}_3, \end{equation}

\noindent where $\fr{m}_i$, $i=1,2,3$, are irreducible and pairwise inequivalent $\op{ad}_{\fr{k}}$-submodules in $\fr{m}$, satisfying the relations

\begin{equation}\label{GenWal}[\fr{m}_i,\fr{m}_j]\subseteq \fr{m}_k, \ \ \makebox{$i,j,k$ distinct}\end{equation}
 
 \noindent (see e.g. \cite{CheKaLi16}, \cite{Nik16}, \cite{Nik21}, where the corresponding class of spaces $G/K$, called generalized Wallach spaces, were independently classified).  Since all the $\op{ad}_{\fr{k}}$-submodules  in decomposition \eqref{far} are irreducible and pairwise inequivalent, any g.o. metric $\Lambda$ has the block-diagonal form \eqref{Schurform}, where $\left.\Lambda\right|_{\fr{m}}=\begin{pmatrix}\mu_1\left.\op{Id}\right|_{\fr{m}_1} & 0 & 0\\
0 & \mu_2\left.\op{Id}\right|_{\fr{m}_2} & 0\\
0 & 0 & \mu_3\left.\op{Id}\right|_{\fr{m}_3}\end{pmatrix}$.  On the other hand, relation \eqref{GenWal} implies that $[\fr{m}_i,\fr{m}_j]$ has non-zero $B$-orthogonal projection to the $B$-orthogonal complement of $\fr{m}_i\oplus \fr{m}_j$, $i=1,2,3$. Then Lemma \ref{eigenv} implies that $\mu_1=\mu_2=\mu_3$.  We conclude that any g.o. metric has the form 

\begin{equation*}\Lambda=\begin{pmatrix} 
\left.\lambda_1\op{Id}\right|_{\fr{sp}(n_1)}& 0  &0 &0\\
 0&\left.\lambda_2\op{Id}\right|_{\fr{sp}(n_2)}&0 &0\\
  0 & 0 &\left.\lambda_3\op{Id}\right|_{\fr{sp}(n_3)}&0\\
  0&0 &0 &\left.\mu\op{Id}\right|_{\fr{m}}
  \end{pmatrix},\end{equation*}
  
  \noindent i.e. corresponds to an inner product of the form \eqref{metricform}.  On the other hand, any metric of this form is naturally reductive (\cite{DaZi79}) and thus geodesic orbit (\cite{KoNo69}).  Therefore, the $G\times K$-g.o. metrics on $Sp(n)$ are precisely the metrics of the form \eqref{metricform}, with $\fr{k}=\fr{sp}(n_1)\oplus \fr{sp}(n_2)\oplus \fr{sp}(n_3)$.  We remark that the above classification can be derived immediately as a special case of Theorem 3.1 in \cite{CheCheDe18}. \end{example}

\subsection{The normalizer lemma and the splitting theorem} As a generalization of a similar result of Nikonorov for g.o. metrics on homogeneous spaces (\cite{Nik17}), the following is true for g.o. metrics on compact Lie groups.  

\begin{lemma}\label{normalizerlemma}\emph{(\cite{So25})} Let $G$ be a compact Lie group with Lie algebra $\fr{g}$ and let $g$ be a $G\times H$-g.o. metric on $G$ with corresponding metric endomorphism $\Lambda$. Denote by $\fr{h}$ the Lie algebra of $H$. Then $\Lambda$ is not only $\op{ad}_{\fr{h}}$-equivariant, but also $\op{ad}_{\fr{k}}$-equivariant, where $\fr{k}=\fr{n}_{\fr{g}}(\fr{h})$.\end{lemma}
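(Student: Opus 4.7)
The goal is to establish $[\Lambda,\op{ad}_Z]=0$ for every $Z\in\fr{k}=\fr{n}_{\fr{g}}(\fr{h})$, which is equivalent to the inner product $\langle \ ,\ \rangle$ associated to $\Lambda$ being preserved by the one-parameter group $\op{Ad}_{\exp(tZ)}$ for all $t\in\R$. The plan is to study the family of endomorphisms $\Lambda_t:=\op{Ad}_{\exp(-tZ)}\circ\Lambda\circ\op{Ad}_{\exp(tZ)}$, observe that $\dot\Lambda_0=[\Lambda,\op{ad}_Z]$, and show $\Lambda_t\equiv\Lambda$.

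A first observation I would record is that the normalizer condition $\op{Ad}_{\exp(tZ)}\fr{h}=\fr{h}$, combined with the given $\op{ad}_{\fr{h}}$-equivariance of $\Lambda$, immediately yields that each $\Lambda_t$ is $\op{ad}_{\fr{h}}$-equivariant and $B$-symmetric. Furthermore, applying $\op{Ad}_{\exp(-tZ)}$ to the bracket identity of Proposition \ref{gocon} evaluated at $\op{Ad}_{\exp(tZ)}X$ transports it to an identity of the same shape, namely $[\widetilde W_t+X,\Lambda_t X]=0$ with $\widetilde W_t:=\op{Ad}_{\exp(-tZ)}W_t\in\fr{h}$. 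Thus each $\Lambda_t$ corresponds to a $G\times H$-g.o. metric in its own right.

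The heart of the argument is then to extract from the g.o. condition enough infinitesimal rigidity to force $\dot\Lambda_0=0$. I would apply Proposition \ref{gocon} to the perturbed vector $X+\epsilon Z$ and expand to first order in $\epsilon$, using the reformulation $[X,\Lambda X]=\Lambda[X,W(X)]$ that the $\op{ad}_{\fr{h}}$-equivariance provides. Assuming a smooth branch $W(X+\epsilon Z)=W_0+\epsilon\dot W+O(\epsilon^2)$ with $W_0,\dot W\in\fr{h}$, the $\epsilon^1$-term rearranges to
\[
[Z,\Lambda X]-\Lambda[Z,X]=\Lambda[\dot W,X]+\Lambda[W_0,Z]-[X,\Lambda Z].
\]
The crucial use of $Z\in\fr{k}$ is that $[W_0,Z]\in\fr{h}$, so the term $\Lambda[W_0,Z]$ remains inside the tractable subspace $\Lambda\fr{h}$. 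Pairing this identity $B$-orthogonally against a spanning set of test vectors, and combining it with the g.o. condition applied at $Z$ itself and at $X+\epsilon W$ for $W\in\fr{h}$, the right-hand side can be confined to a subspace transverse to the left-hand side, forcing the commutator $[Z,\Lambda X]-\Lambda[Z,X]$ to vanish.

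The principal obstacle is the non-uniqueness, and hence non-smoothness, of $W(X)$: the valid choices form an affine space $W_0+\fr{z}_{\fr{h}}(\Lambda X)$, so the derivative $\dot W$ is not \emph{a priori} well defined. To legitimize the linearization above, I would select $W(X)$ as the unique $B$-minimum-norm solution of $[W,\Lambda X]=-[X,\Lambda X]$, that is the element of the affine fibre that is $B$-orthogonal to $\fr{z}_{\fr{h}}(\Lambda X)$; concretely $W(X)=-(\op{ad}_{\Lambda X}|_{\fr{h}})^{+}[X,\Lambda X]$ via the Moore--Penrose pseudo-inverse. This choice is smooth on the open dense subset of $\fr{g}$ where $\op{rk}(\op{ad}_{\Lambda X}|_{\fr{h}})$ is locally constant, making the expansion rigorous there. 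The identity $[\Lambda,\op{ad}_Z]=0$ is then established on this open set and extended to all of $\fr{g}$ by continuity.
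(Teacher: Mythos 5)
The paper does not prove this lemma itself (it is quoted from \cite{So25}), so I am judging your argument on its own terms and against the standard proof from that source. Your proposal has a genuine gap at its central step. The preliminary observations are fine: $\Lambda_t=\op{Ad}_{\exp(-tZ)}\circ\Lambda\circ\op{Ad}_{\exp(tZ)}$ is again a $G\times H$-g.o.\ metric endomorphism, and $\dot\Lambda_0=[\Lambda,\op{ad}_Z]$. But that alone does not force $\Lambda_t\equiv\Lambda$, and the linearization that is supposed to close the argument does not do so. After expanding the g.o.\ condition at $X+\epsilon Z$, the order-$\epsilon$ identity still contains the term $\Lambda[\dot W,X]$, where $\dot W\in\fr{h}$ is an unknown depending on $X$ and on your choice of branch of $W(\cdot)$; as $\dot W$ ranges over $\fr{h}$ this term sweeps out essentially all of $\Lambda[\fr{h},X]$, and nothing confines it. The sentence ``the right-hand side can be confined to a subspace transverse to the left-hand side'' is exactly the assertion that needs proof, and no mechanism is offered for it; the auxiliary inputs you invoke (the g.o.\ condition at $Z$ and at $X+\epsilon W$) do not visibly eliminate $\dot W$ or the term $[X,\Lambda Z]$. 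There are also sign inconsistencies in the displayed $\epsilon^1$-identity, and the Moore--Penrose selection, while a reasonable patch for smoothness of $W(X)$, is heavy machinery deployed to legitimize a computation whose conclusion is never actually reached.

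For comparison, the known proof avoids differentiating $W(X)$ entirely and runs in two short steps. First, with respect to an $\op{Ad}$-invariant inner product (here $-B$), the orthogonal complement of $\fr{h}$ inside $\fr{n}_{\fr{g}}(\fr{h})$ centralizes $\fr{h}$: if $Z'\in\fr{n}_{\fr{g}}(\fr{h})$ is orthogonal to $\fr{h}$ and $Y,Y'\in\fr{h}$, then $B([Z',Y],Y')=B(Z',[Y,Y'])=0$, so $[Z',Y]\in\fr{h}\cap\fr{h}^{\perp}=\{0\}$; hence $\fr{n}_{\fr{g}}(\fr{h})=\fr{h}+\fr{c}_{\fr{g}}(\fr{h})$ and it suffices to treat $Z\in\fr{c}_{\fr{g}}(\fr{h})$. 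Second, for such $Z$ the g.o.\ condition gives $[X,\Lambda X]=-[W,\Lambda X]$, whence $B([X,\Lambda X],Z)=-B([W,\Lambda X],Z)=B(\Lambda X,[W,Z])=0$ because $[W,Z]=0$. Polarizing the quadratic identity $B([X,\Lambda X],Z)=0$ and using $\op{ad}$-invariance of $B$ together with the $B$-symmetry of $\Lambda$ yields $B\bigl(Y,\Lambda[Z,X]-[Z,\Lambda X]\bigr)=0$ for all $X,Y$, i.e.\ $\Lambda\circ\op{ad}_Z=\op{ad}_Z\circ\Lambda$. Note that here the non-uniqueness of $W(X)$ is harmless, since only the fact that $W(X)\in\fr{h}$ commutes with $Z$ is used. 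I would recommend replacing your linearization strategy with this argument, or at minimum supplying an actual proof of the ``transversality'' claim on which your version hinges.
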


The following is an immediate corollary of Lemma \ref{normalizerlemma}.

\begin{corol}Let $(G, g)$ be a compact simple Riemannian Lie group where $g$ is
a left-invariant geodesic orbit metric. Let $\fr{g}\times \fr{k}$ be the Lie algebra of the isometry group of $(G, g)$. Then $\fr{k}$ is a self-normalizing subalgebra of $\fr{g}$, i.e. $\fr{k}=\fr{n}_{\fr{g}}(\fr{k})$.\end{corol}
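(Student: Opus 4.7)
The plan is to apply Lemma \ref{normalizerlemma} with the subgroup $H$ taken as the right-translation factor of the isometry group. Since $g$ is a geodesic orbit metric, the Ochiai--Takahashi theorem together with the preceding discussion shows that $g$ is a $G\times K$-geodesic orbit metric, where $K$ denotes the connected Lie subgroup of $G$ with Lie algebra $\fr{k}$. Applying Lemma \ref{normalizerlemma} with $H=K$, the corresponding metric endomorphism $\Lambda$ is not merely $\op{ad}_{\fr{k}}$-equivariant but also $\op{ad}_{\fr{n}_{\fr{g}}(\fr{k})}$-equivariant.

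Denote by $N$ the connected Lie subgroup of $G$ whose Lie algebra is $\fr{n}_{\fr{g}}(\fr{k})$. Since $N$ is connected, the $\op{ad}_{\fr{n}_{\fr{g}}(\fr{k})}$-equivariance of $\Lambda$ is equivalent to the $\op{Ad}_{N}$-invariance of $g$, so that all right translations by elements of $N$ are isometries of $(G,g)$. These right translations form a connected subgroup of $\op{Iso}(G,g)$ and hence lie in the identity component $\op{Iso}(G,g)^{0}$, whose Lie algebra is $\fr{g}\times \fr{k}$. Projecting to the right factor yields $\fr{n}_{\fr{g}}(\fr{k})\subseteq \fr{k}$. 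Since the opposite inclusion $\fr{k}\subseteq \fr{n}_{\fr{g}}(\fr{k})$ is automatic (any subalgebra is contained in its normalizer), we conclude that $\fr{k}=\fr{n}_{\fr{g}}(\fr{k})$.

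I do not anticipate a real obstacle: the proof is essentially a one-step consequence of Lemma \ref{normalizerlemma} together with the maximality of $\fr{g}\times \fr{k}$ as the isometry Lie algebra. The only mild point requiring care is that the right translations by $N$ genuinely land in $\op{Iso}(G,g)^{0}$, but this is immediate from the connectedness of $N$ and ensures that the inclusion can be read off at the Lie algebra level.
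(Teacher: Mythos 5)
Your argument is correct and is precisely the route the paper intends: the paper gives no written proof, calling the statement an immediate consequence of Lemma \ref{normalizerlemma}, and your write-up supplies exactly that deduction (apply the lemma with $H=K$, upgrade $\op{ad}_{\fr{n}_{\fr{g}}(\fr{k})}$-equivariance to $\op{Ad}_N$-invariance by connectedness, and conclude $\fr{n}_{\fr{g}}(\fr{k})\subseteq\fr{k}$ from the maximality of $\fr{g}\times\fr{k}$ as the isometry Lie algebra). No gaps; the care you take about landing in the identity component is the right point to flag.
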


 The normalizer lemma is quite useful in the case where $H$ is a weakly regular subgroup of $G$, as it induces the following splitting of the g.o. metric.

\begin{theorem}\label{splittingtheorem}\emph{(\cite{So25})} Let $G$ be a compact Lie group with Lie algebra $\fr{g}$ and let $H$ be a connected weakly regular subgroup of $G$ with Lie algebra $\fr{h}$. Set $\fr{k}:=\fr{n}_{\fr{g}}(\fr{h})$ and consider an orthogonal decomposition $\fr{g}=\fr{k}\oplus \fr{m}$ with respect to an $\op{Ad}$-invariant inner product on $\fr{g}$. Let $\Lambda$ be the metric endomorphism corresponding to a left-invariant $G\times H$-g.o. metric g on $G$.  Then the spaces $\fr{k}$ and $\fr{m}$ are $\Lambda$-invariant, i.e. $\Lambda$ splits as $\Lambda=\begin{pmatrix}\left.\Lambda\right|_{\fr{k}} & 0\\ 0 & \left.\Lambda\right|_{\fr{m}}\end{pmatrix}$.  In particular, $\Lambda$ can be represented by the block-diagonal form \eqref{Schurform}, where the restriction $\left.\Lambda\right|_{\fr{m}}$ defines a $G$-g.o. metric on the homogeneous space $G/H$. \end{theorem}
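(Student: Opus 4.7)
The plan is to combine the normalizer lemma (Lemma \ref{normalizerlemma}) with a Schur's-lemma argument that leverages the weak regularity hypothesis. First I would invoke Lemma \ref{normalizerlemma}: since $g$ is a $G\times H$-g.o. metric, the endomorphism $\Lambda$ is $\op{ad}_{\fr{k}}$-equivariant, where $\fr{k}=\fr{n}_{\fr{g}}(\fr{h})$. Because the fixed inner product on $\fr{g}$ is $\op{Ad}$-invariant and $K$ is a Lie subgroup of $G$, the subspace $\fr{k}$ is $\op{Ad}_K$-invariant (and in particular $\op{ad}_{\fr{k}}$-invariant), hence so is its orthogonal complement $\fr{m}$; thus both $\fr{k}$ and $\fr{m}$ are $\op{ad}_{\fr{k}}$-submodules of $\fr{g}$.

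Next I would apply Schur's lemma to conclude the splitting. By the weak regularity of $H$ (Definition \ref{weakly}), no non-trivial $\op{ad}_{\fr{k}}$-submodule of $\fr{k}$ is equivalent to any $\op{ad}_{\fr{k}}$-submodule of $\fr{m}$. Decomposing $\fr{k}$ and $\fr{m}$ into isotypic components and then into irreducibles, any $\op{ad}_{\fr{k}}$-equivariant map between them must therefore vanish. Applied to $\Lambda$, this gives that the off-diagonal blocks $\fr{k}\to\fr{m}$ and $\fr{m}\to\fr{k}$ of $\Lambda$ are zero, which is exactly the claimed splitting $\Lambda=\left.\Lambda\right|_{\fr{k}}\oplus\left.\Lambda\right|_{\fr{m}}$.

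To obtain the refined block-diagonal form \eqref{Schurform} on $\fr{k}$, I would use the canonical decomposition $\fr{k}=\fr{z}(\fr{k})\oplus\fr{k}_1\oplus\cdots\oplus\fr{k}_s$ from \eqref{dec}. The simple ideals $\fr{k}_i$ are $\op{ad}_{\fr{k}}$-irreducible and pairwise inequivalent, while $\fr{z}(\fr{k})$ carries the trivial $\op{ad}_{\fr{k}}$-action and is therefore inequivalent to any $\fr{k}_i$ as well. A further application of Schur's lemma forces $\left.\Lambda\right|_{\fr{k}_i}=\lambda_i\op{Id}$ for each $i$ and preserves $\fr{z}(\fr{k})$, yielding the form \eqref{Schurform}.

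For the final claim that $\left.\Lambda\right|_{\fr{m}}$ defines a $G$-g.o. metric on $G/H$, I would use Proposition \ref{gocon}: for each $X\in\fr{m}$ there is $W\in\fr{h}$ with $[W+X,\Lambda X]=0$, and the splitting gives $\Lambda X\in\fr{m}$, while $[W,X]\in\fr{m}$ since $\fr{m}$ is $\op{ad}_{\fr{h}}$-invariant. Projecting this identity onto an appropriate reductive complement of $\fr{h}$ and using that $H\subseteq K$ should yield the homogeneous g.o. condition for $\left.\Lambda\right|_{\fr{m}}$ on $G/H$. I expect this last step to be the main obstacle, because it requires carefully reconciling the two different decompositions in play: the decomposition $\fr{g}=\fr{k}\oplus\fr{m}$ used for the splitting and the reductive decomposition $\fr{g}=\fr{h}\oplus\fr{h}^{\perp}$ natural for $G/H$. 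In particular, one must argue that the complementary piece $\fr{k}\ominus\fr{h}\subseteq\fr{h}^{\perp}$ does not interfere, i.e.\ that the $\fr{h}^{\perp}$-projection of $[W+X,\Lambda X]$ lies in $\fr{m}$, so that the g.o. condition on $G/H$ reduces to a statement purely about $\left.\Lambda\right|_{\fr{m}}$.
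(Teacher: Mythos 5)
The paper does not actually prove Theorem \ref{splittingtheorem}; it is quoted verbatim from \cite{So25}, so there is no in-paper argument to compare against. Your reconstruction of the first two steps is correct and is surely the intended one: Lemma \ref{normalizerlemma} upgrades $\op{ad}_{\fr{h}}$-equivariance of $\Lambda$ to $\op{ad}_{\fr{k}}$-equivariance, and Schur's lemma applied to the $\op{ad}_{\fr{k}}$-modules $\fr{k}$ and $\fr{m}$ kills the off-diagonal blocks, while the decomposition of $\fr{k}$ into its center and pairwise inequivalent simple ideals gives \eqref{Schurform}. One point you should not gloss over: Definition \ref{weakly} only forbids equivalences between \emph{non-trivial} submodules of $\fr{k}$ and submodules of $\fr{m}$, so a priori $\Lambda$ could still couple $\fr{z}(\fr{k})$ (a trivial module) to a trivial $\op{ad}_{\fr{k}}$-submodule of $\fr{m}$. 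This is ruled out because a vector of $\fr{m}$ annihilated by $\op{ad}_{\fr{k}}$ would in particular normalize $\fr{h}$ and hence lie in $\fr{k}=\fr{n}_{\fr{g}}(\fr{h})$, forcing it to vanish; so $\fr{m}$ contains no trivial submodule and your Schur argument goes through.

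The genuine gap is the final step, which you flag but do not close --- and you are overcomplicating it. There is no need to project onto a reductive complement of $\fr{h}$: the homogeneous space one should work with is $G/K$, where $K$ is the connected subgroup with Lie algebra $\fr{k}$, for which the reductive complement is exactly $\fr{m}$ and $\left.\Lambda\right|_{\fr{m}}$ is an $\op{ad}_{\fr{k}}$-equivariant positive symmetric endomorphism, hence defines a $G$-invariant metric. The geodesic lemma for $G/K$ asks, for each $X\in\fr{m}$, for some $a\in\fr{k}$ with $[a+X,\Lambda X]\in\fr{k}$; Proposition \ref{gocon} hands you $W\in\fr{h}\subseteq\fr{k}$ with $[W+X,\Lambda X]=0\in\fr{k}$, and you are done. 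If instead you insist on $G/H$ with $\fr{h}\subsetneq\fr{k}$, the claim as literally stated does not parse: $T_{eH}(G/H)\cong\fr{h}^{\perp}\supsetneq\fr{m}$, and $\fr{h}^{\perp}$ need not even be $\Lambda$-invariant (for instance $\left.\Lambda\right|_{\fr{z}(\fr{k})}$ need not preserve $\fr{h}\cap\fr{z}(\fr{k})$). So the statement should be read with $G/K$ in place of $G/H$; the two coincide exactly in the self-normalizing case $\fr{h}=\fr{k}$, which is the situation exploited in the proof of Theorem \ref{main}.
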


\begin{remark}\label{classifications}Under the assumptions of Theorem \ref{splittingtheorem},  the restriction $\left.\Lambda\right|_{\fr{m}}$ defines a g.o. metric on the homogeneous space $G/H$.  This fact reduces, to a large extent, the classification of g.o. metrics on Lie groups to the classification of g.o. metrics on homogeneous spaces $G/H$.  There exists a plethora of partial classifications for compact homogeneous g.o. spaces $(G/H,g)$.  Some notable partial classifications (for example if $H$ is abelian or simple) can be found in the works \cite{AlArv07}, \cite{AlNik09}, \cite{CheNiNi23}, \cite{So21}, \cite{Ta99}.  Homogeneous g.o. spaces are intensively studied, but we will not delve further into this topic.  Instead, we refer the interested reader to the monograph \cite{BerNik20} for results up to 2020.  \end{remark}

\begin{remark}\label{remnormal}If $G$ is simple and $\left.\Lambda\right|_{\fr{m}}=\lambda\left.\op{Id}\right|_{\fr{m}}$ then $\left.\Lambda\right|_{\fr{m}}$ is the standard metric, induced from the Killing form of $\fr{g}$, which is geodesic orbit.\end{remark}

\section{Geodesic orbit metrics on $G_2$ - Proof of Theorem \ref{main}}\label{Sec3}

Using the main tools discussed in the previous section, we prove Theorem \ref{main}. Firstly we need the following.

\begin{lemma}\label{lemnorm}Let $\fr{g}$ be a compact Lie algebra of rank two and let $\fr{h}$ be a non-regular subalgebra of $\fr{g}$.  Then $\fr{h}$ is self-normalizing in $\fr{g}$, i.e. $\fr{h}=\fr{n}_{\fr{g}}(\fr{h})$.\end{lemma}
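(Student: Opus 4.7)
The plan is to argue by contradiction. Set $\fr{k}:=\fr{n}_{\fr{g}}(\fr{h})$, so that $\fr{h}$ is an ideal of $\fr{k}$, and suppose the inclusion $\fr{h}\subseteq \fr{k}$ is strict. The goal is to derive a contradiction from the rank-two hypothesis together with the non-regularity of $\fr{h}$.

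First I would eliminate the trivial case $\fr{h}=\{0\}$: then $\fr{n}_{\fr{g}}(\fr{h})=\fr{g}$ contains every Cartan subalgebra of $\fr{g}$, so $\fr{h}$ would be regular. Hence non-regularity forces $\rnk(\fr{h})\geq 1$, and in particular $\rnk(\fr{k})\geq 1$ as well.

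Next I would bound $\rnk(\fr{k})$ from above. In a compact Lie algebra, Cartan subalgebras coincide with maximal abelian subalgebras; hence any Cartan subalgebra $\fr{t}$ of $\fr{k}$, viewed as an abelian subalgebra of $\fr{g}$, extends to some maximal abelian subalgebra $\fr{t}'\subseteq \fr{g}$ of dimension $\rnk(\fr{g})=2$. If $\dim \fr{t}=2$, then $\fr{t}=\fr{t}'$ is a Cartan subalgebra of $\fr{g}$ contained in $\fr{k}$, contradicting the non-regularity of $\fr{h}$. Therefore $\rnk(\fr{k})\leq 1$, and combined with the previous step we get $\rnk(\fr{k})=1$.

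The last step will invoke the classification of rank-one compact Lie algebras: up to isomorphism they are $\fr{u}(1)$ and $\fr{su}(2)$, and neither admits a non-trivial proper ideal. But $\fr{h}$ is an ideal of $\fr{k}$ which, by assumption, is neither $\{0\}$ nor the whole of $\fr{k}$. This is the desired contradiction, so $\fr{h}=\fr{k}$. I do not anticipate a genuine obstacle here: the rank-two hypothesis rigidly constrains the normalizer, and the delicate point is merely the observation that any two-dimensional abelian subalgebra of $\fr{g}$ already exhausts a Cartan subalgebra of $\fr{g}$, after which the result follows from the rank-one classification.
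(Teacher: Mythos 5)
Your proof is correct, and it reaches the conclusion by a slightly different route than the paper. The paper first pins down the structure of $\fr{h}$ itself: non-regularity excludes both the trivial and the abelian cases as well as maximal rank, so $\fr{h}$ is simple of rank one; it then uses simplicity to split the normalizer as $\fr{n}_{\fr{g}}(\fr{h})=\fr{h}\oplus\fr{c}_{\fr{g}}(\fr{h})$ and kills the centralizer by observing that any nonzero $X\in\fr{c}_{\fr{g}}(\fr{h})$ together with a Cartan subalgebra of $\fr{h}$ spans a two-dimensional abelian subalgebra of the normalizer, i.e.\ a Cartan subalgebra of $\fr{g}$, contradicting non-regularity. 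You instead work directly with $\fr{k}=\fr{n}_{\fr{g}}(\fr{h})$: the same ``two-dimensional abelian $=$ Cartan'' observation bounds $\rnk(\fr{k})\leq 1$, and then the classification of rank-one compact Lie algebras ($\fr{u}(1)$ and $\fr{su}(2)$, neither of which has a nonzero proper ideal) forces $\fr{h}=\fr{k}$. The two arguments share the same engine (the rank-two hypothesis makes any two-dimensional abelian subalgebra a Cartan subalgebra), but yours bypasses the need to show $\fr{h}$ is simple and to invoke the decomposition of the normalizer into the ideal plus its centralizer, at the modest cost of appealing to the (trivial) classification of rank-one compact Lie algebras. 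Both are complete; the paper's version has the side benefit of explicitly identifying $\fr{h}\cong\fr{su}(2)$, which is consistent with the explicit subalgebras $\fr{h}_1,\fr{h}_2$ used later in Section 3.
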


\begin{proof} Since the trivial subalgebra is regular, we have $\fr{h}\neq \{0\}$ and thus $rank(\fr{h})\geq 1$. In fact, since $\fr{h}$ is not regular, it cannot have maximal rank, and thus $rank(\fr{h})=1$ and $\fr{h}$ is simple.  We consider the Lie algebra direct sum $\fr{n}_{\fr{g}}(\fr{h})=\fr{h}\oplus \fr{c}_{\fr{g}}(\fr{h})$, where $\fr{c}_{\fr{g}}(\fr{h})=\{X\in \fr{g}:[X,Y]=0 \ \ \makebox{for all} \ \ Y\in \fr{h}\}$.  If $X$ is a non-zero vector in $\fr{c}_{\fr{g}}(\fr{h})$  and $\fr{t}$ is a (one-dimensional) Cartan subalgebra of $\fr{h}$ then $\fr{t}\oplus \op{span}_{\mathbb R}\{X\}$ is an abelian subalgebra of $\fr{n}_{\fr{g}}(\fr{h})$ having rank two. Therefore, the rank of $\fr{n}_{\fr{g}}(\fr{h})$ is two, which is a contradiction given that $\fr{h}$ is not regular.  We conclude that $X=\{0\}$, implying that $\fr{c}_{\fr{g}}(\fr{h})=\{0\}$, and thus $\fr{n}_{\fr{g}}(\fr{h})=\fr{h}$.  \end{proof}

\emph{Proof of Theorem \ref{main}.}  Firstly, we recall again that any metric of the form \eqref{metricform} is naturally reductive (\cite{DaZi79}) and hence geodesic orbit.  Conversely, assume that $\Lambda$ is the metric endomorphism of a left-invariant g.o. metric on $G_2$.  Then there exists a Lie subgroup $H$ of $G_2$ such that $\Lambda$ is a $G_2\times H$-g.o. metric, i.e. every geodesic is the orbit of an one-parameter subgroup of $G_2\times H$. Let $\fr{h}$ be the Lie algebra of $H$.  Since the one-parameter subgroups of $G_2\times H$ have the form $(\exp(tX),\exp(tY))$, $X \in \fr{g}_2$, $Y\in \fr{h}$, we may assume that $H=\exp(\fr{h})$, i.e. $H$ is connected, without any loss of generality.

For the next step, we will show that every subalgebra of $G_2$ is weakly regular, which implies that $H$ is a weakly regular subgroup of $G_2$.  According to \cite{Ma16}, the Lie subalgebras of the complexified Lie algebra $\fr{g}_2^{\mathbb C}$ are exhausted (up to conjugation by an inner automorphism) by 64 types of regular subalgebras, 2 types of non-regular semisimple subalgebras and 49 types of non-regular solvable subalgebras. Therefore, the compact real form $\fr{g}_2$ has only 2 non-regular subalgebras. Although they are not regular, we will prove, based on the results in \cite{So22}, that those two subalgebras are weakly regular. 

To list those algebras, we firstly consider a Cartan subalgebra $\fr{t}$ of $\fr{g}_2^{\mathbb C}$ and the root decomposition $\fr{g}_2^{\mathbb C}=\fr{t}\oplus \sum_{\gamma\in R}\fr{g}^{\gamma}$, corresponding to the root system 

\[  R=\{\pm \alpha, \pm \beta, \pm (\alpha+\beta),\pm (2\alpha+\beta), \pm (3\alpha+\beta),\pm (3\alpha+2\beta)\} \]

\noindent of $\fr{g}_2^{\mathbb C}$, where $\fr{g}^{\gamma}=\{X\in \fr{g}_2^{\mathbb C}:[a,X]=\gamma(a)X \ \ \makebox{for all} \ \ a\in \fr{t}\}$.  Denote by $Q$ the Killing form of $\fr{g}_2^{\mathbb C}$, which is non-degenerate on $\fr{t}$, and let $( \ , \ )$ be the dual form on $\fr{t}^*$.  For any $\gamma\in R\subset \fr{t}^*$, let $t_{\gamma}$ be the corresponding covector defined by $Q(t_{\gamma},a)=\gamma(a)$ for all $a\in \fr{t}$.  Set $H_{\gamma}:=\frac{2t_{\gamma}}{(\gamma,\gamma)}$.   Consider root elements $E_{\gamma}\in \fr{g}^{\gamma}$ so that the set $\{H_{\alpha},H_{\beta},E_{\gamma}: \gamma\in R\}$ is a Chevalley basis of $\fr{g}^{\mathbb C}$. Let $R^+$ be the set of positive roots in $R$. For $\gamma\in R^+$, set 

\begin{equation*}F_{\gamma}:=E_{\gamma}-E_{-\gamma},\ \ G_{\gamma}:=\sqrt{-1}(E_{\gamma}+E_{-\gamma})\ \  \makebox{and}  \ \ \fr{m}_{\gamma}:=\op{span}_{\mathbb R}\{F_{\gamma},G_{\gamma}\}.\end{equation*}

\noindent Then the compact real form $\fr{g}_2$ of $\fr{g}_2^{\mathbb C}$ admits the $B$-orthogonal decomposition 

\[ \fr{g}_2=\sqrt{-1}\fr{t}\oplus \bigoplus_{\gamma\in R^+}\fr{m}_{\gamma}. \]

\noindent According to \cite{Ma16} (see also \cite{DoRe18}), the two non-regular semisimple subalgerbas of $\fr{g}_2^{\mathbb C}$ are the subalgebras $\fr{h}^{\mathbb C}_1=\op{span}_{\mathbb C}\{\sqrt{2}(E_{3\alpha+2\beta}+E_{-\beta}),\sqrt{2}(E_{\beta}+E_{-(3\alpha+2\beta)}),2H_{3\alpha+\beta}\}$ and $\fr{h}^{\mathbb C}_2=\op{span}_{\mathbb C}\{\sqrt{6}E_{\alpha}+\sqrt{10}E_{\beta},\sqrt{6}E_{-\alpha}+\sqrt{10}E_{-\beta},14H_{9\alpha+5\beta}\}$, isomorphic to $\fr{sl}_2\mathbb C$. Their corresponding compact real forms in $\fr{g}_2$ are the algebras $\fr{h}_1=\op{span}_{\mathbb R}\{\sqrt{2}(F_{3\alpha+2\beta}-F_{\beta}),\sqrt{2}(G_{3\alpha+2\beta}+G_{\beta}),2\sqrt{-1} H_{3\alpha+\beta}\}$ and $\fr{h}_2=\op{span}_{\mathbb R}\{\sqrt{6}F_{\alpha}+\sqrt{10}F_{\beta},\sqrt{6}G_{\alpha}+\sqrt{10}G_{\beta},14\sqrt{-1} H_{9\alpha+5\beta}\}$, both isomorphic to $\fr{su}(2)$.  Moreover, by Lemma \ref{lemnorm}, both Lie subalgebras $\fr{h}_i$ are equal to their corresponding normalizers $\fr{n}_{\fr{g}_2}(\fr{h}_i)$.  

 According to \cite{So22}, the $B$-orthogonal complement $\fr{m}$ of $\fr{h}_1$ in $\fr{g}_2$ decomposes into $\op{ad}_{\fr{h}_1}$-irreducible (and thus $\op{ad}_{\fr{n}_{\fr{g}_2}(\fr{h}_1)}$-irreducible) submodules as $\fr{m}=\fr{p}\oplus \fr{q}$, where \\$\fr{p}=\op{span}_{\mathbb R}\{\sqrt{2}(F_{3\alpha+2\beta}+F_{\beta}),\sqrt{2}(G_{3\alpha+2\beta}-G_{\beta}), 2\sqrt{-1} H_{\alpha+\beta}\}\oplus \fr{m}_{3\alpha+\beta}$ and $\fr{q}=\fr{m}_{\alpha}\oplus \fr{m}_{\alpha+\beta}\oplus \fr{m}_{2\alpha+\beta}$.  Since $\fr{h}_1$ is simple and self-normalizing, it is $\op{ad}_{\fr{h}_1}$-irreducible and thus $\op{ad}_{\fr{n}_{\fr{g}_2}(\fr{h}_1)}$-irreducible. Comparing the dimensions of the irreducible submodules $\fr{p}$ and $\fr{q}$ with $\fr{h}_1$, we deduce that any non-trivial $\op{ad}_{\fr{n}_{\fr{g}_2}(\fr{h}_1)}$-submodule in $\fr{h}_1=\op{ad}_{\fr{n}_{\fr{g}_2}(\fr{h}_1)}$ is inequivalent to any $\op{ad}_{\fr{n}_{\fr{g}_2}(\fr{h}_1)}$-submodule in $\fr{m}$.  By definition \ref{weakly}, $\fr{h}_1$ is a weakly regular subalgebra of $\fr{g}_2$.

Similarly, according to \cite{So22}, the $B$-orthogonal complement of $\fr{h}_2$ in $\fr{g}_2$ is the space $\fr{m}=\op{span}_{\mathbb R}\{\sqrt{10}F_{\alpha}-3\sqrt{6}F_{\beta},\sqrt{10}G_{\alpha}-3\sqrt{6}G_{\beta},2\sqrt{-1} H_{\alpha-\beta}\}\oplus \fr{m}_{\alpha+\beta}\oplus \fr{m}_{2\alpha+\beta}\oplus \fr{m}_{3\alpha+\beta}\oplus \fr{m}_{3\alpha+2\beta}$.  This space is $\op{ad}_{\fr{h}_2}$-irreducible and thus $\op{ad}_{\fr{n}_{\fr{g}_2}(\fr{h}_2)}$-irreducible.  Using the same reasoning as above, we deduce that $\fr{h}_2$ is also a weakly regular subalgebra of $\fr{g}_2$.  We conclude that both $\fr{h}_1$ and $\fr{h}_2$ are weakly regular subalgebras of $\fr{g}_2$, and thus all subalgebras of $\fr{g}_2$ are weakly regular.

Summarizing the above, we conclude that any g.o. metric on $G_2$ is a $G_2\times H$-g.o. metric with $H$ being a connected weakly regular subgroup.  By Theorem \ref{splittingtheorem}, the metric endomorphism $\Lambda$ can be represented by the block-diagonal form \eqref{Schurform}, where the restriction $\left.\Lambda\right|_{\fr{m}}$ defines a $G_2$-g.o. metric on the homogeneous space $G_2/H$.  On the other hand, by Corollary 1.2 in \cite{So22}, any such metric is standard, i.e. $\Lambda=\left.\op{Id}\right|_{\fr{m}}$.  Therefore, the metric $\Lambda$ corresponds to the inner product \eqref{metricform}, thus concluding the proof.\qed

\end{document}